\title{Accuracy and stability of the hyperbolic model\\time integration scheme revisited}
\author{Mikhail A. Botchev}
\begin{document}
\maketitle

\begin{abstract}

The hyperbolic model (HM) time integration scheme tackles parabolic problems
by adding a small artificial second order time derivative term.  
Described by Samarskii in his 1971 book, the scheme reappeared  
as the generalized Du~Fort--Frankel scheme
in a 1976 paper by Gottlieb and Gustafsson.
In this note we revisit accuracy and stability properties of the scheme.
In particular, we show that the stability condition, formulated
by Samarskii based on operator inequalities, coincides
with the requirement that the eigenvalues of the amplification matrix
(the stability function operator) are smaller than one
in absolute value.  However,
under this condition, the norm of this matrix may exceed one and 
this, as recently pointed out by Corem and Ditkowski (2012), may 
corrupt convergence of the scheme.
Hence, we also discuss whether this eventual stability lack can be detected and mitigated in practice.



\noindent
\textbf{2010 Mathematical Subject Classification:} 65M20, 65L04, 65L20.

\noindent
\textbf{Keywords and phrases:}
hyperbolization; explicit three-level time integration; 
generalized Du Fort--Frankel schemes
\end{abstract}

\maketitle

\newcommand{\Cc}{\mathbb{C}}
\newcommand{\eps}{\epsilon}
\newcommand{\epst}{\tilde{\varepsilon}}
\newcommand{\et}{\tilde{e}}
\newcommand{\geqs}{\geqslant}
\newcommand{\leqs}{\leqslant}
\newcommand{\Rr}{\mathbb{R}}
\newcommand{\vareps}{\varepsilon}
\newcommand{\yh}{\hat{y}}
\newcommand{\yt}{\tilde{y}}

\newtheorem{proposition}{Proposition}
\newtheorem{remark}{Remark}

\section{Introduction}
We aim at solving an initial value problem (IVP) 
\begin{equation}
\label{IVP}
y'(t) = - A y(t) + f(t), \qquad y(0) =y^0,
\qquad 0\leqs t\leqs T,  
\end{equation}
where a symmetric positive semidefinite matrix $A=A^T\in\Rr^{N\times N}$,
a vector function $f:[0,T]\rightarrow \Rr^N$ and a vector $y^0$ are given.
Our interest concerns problems~\eqref{IVP} which arise from a space discretization
of parabolic PDE, so that $A$ is a discretized partial derivative operator
and usually has a stiff spectrum, i.e., we can expect that the largest
eigenvalue $\lambda_{\max}$ of~$A$ is of order $h^{-2}$, with $h$ being
the space grid size, and its smallest eigenvalue $\lambda_{\min}\geqs 0$
can be (arbitrarily close to) zero. 

Our study subject in this paper is the following time integration
scheme for solving~\eqref{IVP}, 
usually referred to as the hyperbolic model (HM) 
scheme~\cite{Samarskii_book1971,ZlotnikChetverushkin2008,CheOlkGas23}:
\begin{equation}
\label{HM}
\frac{y^{n+1}-y^{n-1}}{2\tau} + 
\vareps\frac{y^{n+1} -2y^n + y^{n-1}}{\tau^2} = -Ay^n + f^n,
\qquad n=1,2,\dots, 
\end{equation}
where $\tau>0$ is the time step size, $\vareps>0$ is a small parameter,
$f^n=f(t_n)$, $t_n=\tau n$, and $y^n$ is the numerical solution vector
at time $t_n$.
The solution $y^1$ at the first time level can be obtained by a step
of any suitable time integration scheme, e.g., by the explicit Euler method.
The scheme~\eqref{HM}
is attractive due its simplicity, explicit structure and a relaxed time
step restriction as compared to conventional explicit schemes.  
However, the scheme performance heavily depends on a proper choice of the
small parameter in the artificially added hyperbolic term.  
Although there are some, mostly heuristic, approaches for choosing
the small parameter (e.g., based on 
``the characteristic rate of the process''~\cite{ChetverushkinOlkhovskayaGasilov23a}
or heat front speed estimates~\cite{CheOlkGas23}),
a proper choice of~$\vareps$ in practice remains an important issue.

As far as we know, the HM scheme~\eqref{HM} was for the first time considered
in~1971 by Samarskii~\cite[Chapter~VI, \S~2]{Samarskii_book1971}.
There in formula~(80), for $\kappa>0$, a finite difference scheme
is given
\begin{equation}
\label{HM1}
\frac{y^{n+1}-y^{n-1}}{2\tau} + 
\kappa\tau^2\frac{y^{n+1} -2y^n + y^{n-1}}{\tau^2} = -Ay^n + f^n,
\qquad n=1,2,\dots, 
\end{equation}
which is the HM scheme~\eqref{HM} written for $\vareps=\kappa\tau^2$.  
Samarskii also
provided~\cite[Chapter~VI, \S~2, formula~(81)]{Samarskii_book1971}
a sufficient stability condition for the HM scheme:
\begin{equation}
\label{stab1}
\kappa>\frac14\|A\|
\quad\stackrel{\;\vareps=\kappa\tau^2}{\Longleftrightarrow}\quad
\tau< \sqrt{4\vareps\|A\|^{-1}},
\end{equation}
where $\|A\|$ is an operator norm (i.e., a matrix norm induced
by a vector norm).
As is further pointed out in~\cite[Chapter~VI, \S~2]{Samarskii_book1971},
if IVP~\eqref{IVP} is obtained by the standard three-point finite difference
space discretization of a 1D heat equation, i.e.,
if $(-Ay)_i=(y_{i+1}-2y_i+y_{i+1})/h^2$ at inner grid points~$i$, then 
the well-known scheme of Du~Fort and Frankel~\cite{DuFortFrankel1953}
is a particular case of the HM scheme~\eqref{HM} for the choice
$\vareps = \tau^2/h^2$.
Taking into account that $\lambda_{\max}=\|A\|_2< 4/h^2$,
Samarskii remarks that his stability condition~\eqref{stab1} is
fulfilled for the Du~Fort--Frankel scheme and, thus,
the scheme is unconditionally (for any $\tau>0$ and $h>0$) stable.
In~1973,
a stability condition similar to~\eqref{stab1} was derived
for an implicit-explicit generalization of~\eqref{HM},
see~\cite[page~238]{SamarskiiGulin_book1973}; 
when applied to scheme~\eqref{HM}, the stability condition 
formula appearing there is equivalent to~\eqref{stab1}.
  
Note that a link from the original Du~Fort--Frankel 
scheme to a hyperbolical perturbation of the 1D heat equation problem, 
is made already in the 1967 classical book~\cite[Chapter~7.5]{RichtMorton}.
As noted there, the Du~Fort--Frankel scheme is consistent only
if $\tau/h\rightarrow 0$ for $\tau\rightarrow 0$.
In 1976, the HM scheme~\eqref{HM} 
reappeared in~\cite{GottliebGustafsson1976}, where the
motivation was to generalize the original Du~Fort--Frankel scheme
to an arbitrary space dimension, space discretization, and number of
equations.
The authors called their method the generalized Du Fort--Frankel
scheme~\cite[formula~(2.2)]{GottliebGustafsson1976}
and it is the HM scheme applied to~\eqref{IVP} 
for $\vareps=\gamma\sigma\tau^2/h^2$,
where $\gamma>0$ is a small parameter and $\sigma>0$ is the heat
conduction coefficient.  Furthermore, the authors~\cite{GottliebGustafsson1976}
state that the scheme is consistent provided $\tau/h\rightarrow 0$ and 
derive a stability condition which coincides with
the sufficient stability condition~\eqref{stab1}, given by Samarskii in 1971,
and is claimed to guarantee that the scheme is stable
unconditionally, i.e., for any $\tau>0$ and $h>0$.

As mentioned in~\cite[Chapter~IV.3.1]{HundsdorferVerwer:book},
the original Du Fort--Frankel scheme can be seen as a particular case
of the odd-even Hopscotch method, i.e., a combination of the 
red-black (checkerboard) splitting in space with the ADI 
Peaceman--Rachford time splitting.  Taking into account
that the Du Fort--Frankel scheme is consistent only for $\tau/h\rightarrow 0$,
the authors note that its stability properties is ``of very
limited value''.
They comment that, although the scheme is still occasionally used
to solve problems with very small diffusion 
coefficients to a low accuracy, ``in general it is not a scheme to
be recommended''~\cite[Chapter~IV.3.1]{HundsdorferVerwer:book}.
Recently, Corem and Ditkowski~\cite{CoremDitkowski2012} studied 
accuracy and stability properties of the generalized Du Fort--Frankel
scheme~\cite{GottliebGustafsson1976}.  
They show that, although the scheme has nice stability
properties in the von Neumann sense, the powers of the scheme 
amplification matrix may exhibit a significant growth in norm.
Moreover, this can, in some special cases, lead 
to convergence problems~\cite{CoremDitkowski2012}.
Convergence properties of the HM scheme for nonsmooth
initial data are studied in~\cite{IlyinRykov2016,Moiseev_ea2018}.

The hyperbolic correction, also known as the hyperbolization, 
i.e., modifying PDEs by adding a small second order time derivative term, 
has been widely used for deriving PDE based mathematical models as well as
for treating PDE systems numerically.
In particuar, this approach has been employed in fluid dynamics 
in the context of the so-called quasi-gasdynamic (QGD) systems
for modeling unsteady turbulent flows, combustion processes 
and solving problems in aeroelasticity and 
aeroacoustics~\cite{Chetverushkin2004book,ZlotnikChetverushkin2008}.
Explicit structure of the HM scheme makes it attractive as a
computational tool for modern high-performance computing 
systems~\cite{ChetverushkinGulin2012}. 
Considering a model heat equation with a heat conduction
coefficient~$K$ and substituting $\|A\|=4K/h^2$ into the stability 
condition~\eqref{stab1}, 
the authors~\cite{ChetverushkinGulin2012} obtain a stability estimate
\begin{equation}
\label{stab1a}
\tau < \sqrt{4\vareps\|A\|^{-1}} = \sqrt{\frac{4\vareps}{4K/h^2}} =
h\sqrt{\frac{\vareps}{K}}
\end{equation}
and propose to choose $\vareps=Kh$, so that
\begin{equation}
\label{restr1}
\tau < h^{3/2}.
\end{equation}
Clearly, this is a much weaker restriction than the usual $O(h^2)$ 
stability condition of the conventional explicit schemes.
In~\cite{ChetverushkinOlkhovskaya2020}, the HM scheme is applied
to model radiative heat conduction processes on high-performance computing 
systems.  Furthermore, the HM scheme is demonstrated to successfully
solve nonlinear heat conduction equations~\cite{CheOlkGas23}
and radiation diffusion equation~\cite{ChetverushkinOlkhovskayaGasilov23a}.
The authors in~\cite{ChetverushkinOlkhovskaya2020,CheOlkGas23,ChetverushkinOlkhovskayaGasilov23a} use the stability condition~\eqref{stab1a} and
a suitable choice $\vareps=O(h)$, with the proportionality
constant reciprocal to ``the characteristic rate of the process''. 
Hence, a restriction similar to~\eqref{restr1} is obtained.   
We briefly comment on this choice below at the end of Section~\ref{s:num}. 

The aim of this paper is reevaluate accuracy and stability properties
of the HM scheme, taking into account recent results 
of~\cite{CoremDitkowski2012}.
In particular, we present an accuracy analysis of the scheme, separating 
the scheme error into an error triggered by adding the small hyperbolic term
and the truncation error of the scheme with respect to the 
hyperbolically perturbed problem.  Furthermore, we carry out 
a stability analysis of the scheme, which shows, in particular,
that, for symmetric positive definite matrices~$A$, 
the Samarskii stability condition is necessary and sufficient
for the eigenvalues of the amplification matrix be smaller than one
in absolute value.
However, a growth of the amplification matrix powers can occur also
if the eigenvalues are smaller than one in absolute value.  Therefore,
we discuss whether this eventual stability lack can be detected and 
mitigated in practice.

Unless explicitly stated otherwise, throughout
this paper, $\|A\|$ denotes the spectral norm, i.e., the operator norm
$\|A\|=\max_{\|x\|=1}\|Ax\|$ with $\|x\|$ being the Euclidean vector norm
(the 2-norm).
Furthermore, in this paper we use a function
\begin{equation}
\label{phi}
\varphi(t) \equiv \frac{e^t - 1}{t}, \quad t\in\Cc,
\end{equation}
where it is assumed that $\varphi(0)=1$.
Next, by $\omega=\lambda_{\min}\geqs 0$ we denote the smallest eigenvalue
of~$A$.  
There exists a constant $C$ 
(see, e.g.,~\cite[relation~(2.6)]{HochbruckOstermann2010})
such that
\begin{equation}
\label{exp_est}
\|e^{-tA}\|_* \leqs C e^{-\omega t}, \quad t\geqs 0,
\end{equation}
where $e^{-tA}$ is the matrix exponential of $-tA$,
$\|\cdot\|_*$ is an operator norm and the constant
$C$ depends on $A$ and the matrix norm $\|\cdot\|_*$.
For the spectral matrix norm we have $C=1$.

This paper is organized as follows.
In Section~\ref{s:acc_stab} accuracy and stability properties
of the HM scheme are studied. There, in Section~2.1 
we derive
an error estimate for a perturbed problem with a small hyperbolical
term added.  Then, based on a standard truncation error analysis of the
HM scheme with respect to the hyperbolically perturbed problem, conclusions
are made on expected convergence order of the HM scheme with respect to
the original IVP~\eqref{IVP}.
Section~2.2 
is devoted to studying stability properties of the HM scheme,
where we first formulate a result on the eigenvalues of the scheme amplification
matrix~$S$ and then discuss its monotonicity properties and eventual growth
of the powers~$S^n$ in norm.  Section~\ref{s:num} describes numerical tests.
Conclusions are drawn in Section~\ref{s:concl}.

\section{Accuracy and stability of the HM scheme}
\label{s:acc_stab}

\subsection{Local error of the HM scheme}
\label{ss:acc}
In this section we derive an expression for the local error of the HM scheme.
Let $y(t)$ be the exact solution of IVP~\eqref{IVP} 
and let 
\begin{equation}
\label{le_def}
e^{n+1}\equiv y(t_{n+1})-y^{n+1}, \quad n=1,2,\dots,
\end{equation}
be the local error of the HM scheme.  
As usual for a local error analysis 
(see, e.g.,~\cite{ODE1_HNW,HundsdorferVerwer:book}),
we assume that $e^{n+1}$ is an error committed after a step of the HM scheme,
starting with the exact values $y^{n-1}=y(t_{n-1})$, $y^n=y(t_n)$.

Together with IVP~\eqref{IVP}, we also consider its hyperbolical
model approximation (the so-called HM approximation)
\begin{equation}
\label{IVP_HM}
\yt'(t) + \vareps\yt''(t) = - A \yt(t) + f(t), \qquad
\yt(0) = y^0,\; \yt'(0)=v^0,
\qquad 0\leqs t\leqs T,  
\end{equation}
for the unknown function $\yt(t)$, where $v^0$ is given, and 
the notation is the same as in~\eqref{IVP}.
Let $\et(t)$ be the error of the HM approximation, i.e., define
$\et(t)$ as 
\begin{equation}
\label{et}
\et(t) \equiv y(t) - \yt(t), \quad t\geqs 0,
\end{equation}
where $y(t)$ and $\yt(t)$ are the exact solutions of the IVPs~\eqref{IVP}
and~\eqref{IVP_HM}, respectively.  The following result provides
an upper estimate for the HM error $\et(t)$
(similar estimates for weak forms of~\eqref{IVP}
can be found in~\cite{RepinChetverushkin2013}).

\begin{proposition}
\label{P_et}
For the HM error $\et(t)$, defined in~\eqref{et}, holds
\begin{equation}
\label{et_est}
\|\et(t)\| \leqs C\vareps t\varphi(-\omega t) \max_{s\in[0,t]}\|\yt''(s)\|
=
\begin{cases}
\;C\vareps t\max_{s\in[0,t]}\|\yt''(s)\|,\quad &\text{for} \quad \omega=0,
\\
\;C\vareps \dfrac{1-e^{-\omega t}}{\omega}\max_{s\in[0,t]}\|\yt''(s)\|,
\quad &\text{for} \quad \omega>0,
\end{cases}
\end{equation}
where $\|\cdot\|$ is any vector norm, the function $\varphi(t)$ is defined
in~\eqref{phi}, $\omega\geqs 0$ is the smallest eigenvalue of~$A$, 
and the constant $C$ is defined in~\eqref{exp_est}. 
\end{proposition}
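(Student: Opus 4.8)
The plan is to derive an ODE for the error $\et(t)$ and then represent its solution via the variation-of-constants (Duhamel) formula, after which the exponential bound \eqref{exp_est} yields the claimed estimate. First I would subtract the two problems. The exact solution satisfies $y'=-Ay+f$, and the HM solution satisfies $\yt'+\vareps\yt''=-A\yt+f$. Rewriting the latter as $\yt'=-A\yt+f-\vareps\yt''$ and subtracting, the forcing term $f$ cancels and I obtain
\begin{equation}
\et'(t) = -A\et(t) + \vareps\yt''(t), \qquad \et(0)=y(0)-\yt(0)=0,
\end{equation}
where the initial condition vanishes because both IVPs start from $y^0$. Thus $\et$ solves a linear inhomogeneous ODE with the familiar operator $-A$ and with an inhomogeneity $\vareps\yt''(t)$ that is small by the factor $\vareps$ — this is exactly the structure that makes the estimate plausible.

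Next I would write the solution by Duhamel's formula. Since $\et(0)=0$, only the integral term survives:
\begin{equation}
\et(t) = \int_0^t e^{-(t-s)A}\,\vareps\yt''(s)\,ds.
\end{equation}
Taking norms, pulling out $\vareps$, and applying the exponential estimate \eqref{exp_est} with $\|e^{-(t-s)A}\|\leqs Ce^{-\omega(t-s)}$ gives
\begin{equation}
\|\et(t)\| \leqs C\vareps \int_0^t e^{-\omega(t-s)}\,\|\yt''(s)\|\,ds
\leqs C\vareps \Bigl(\max_{s\in[0,t]}\|\yt''(s)\|\Bigr)\int_0^t e^{-\omega(t-s)}\,ds.
\end{equation}
The remaining scalar integral is elementary: the substitution $r=t-s$ turns it into $\int_0^t e^{-\omega r}\,dr$, which equals $t$ when $\omega=0$ and $(1-e^{-\omega t})/\omega$ when $\omega>0$. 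Recognizing that both cases are captured uniformly by $t\,\varphi(-\omega t)$, with $\varphi$ as in \eqref{phi} and the convention $\varphi(0)=1$ supplying the $\omega=0$ limit, reproduces exactly the right-hand side of \eqref{et_est}.

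I do not expect a genuine obstacle here; the argument is a clean Duhamel estimate. The one point requiring slight care is the cancellation of initial data: the estimate relies on $\et(0)=0$, which holds because \eqref{IVP_HM} prescribes $\yt(0)=y^0$, matching $y(0)=y^0$. The auxiliary initial velocity $v^0$ in \eqref{IVP_HM} does not enter the error equation directly — it only affects $\yt$ and hence $\yt''$, so it is already absorbed into the factor $\max_{s\in[0,t]}\|\yt''(s)\|$ on the right. Finally I would note that the identification of the two integral values with the single closed form $t\,\varphi(-\omega t)$ is merely the definition of $\varphi$, so no separate case analysis is needed in the final statement beyond the explicit display given in \eqref{et_est}.
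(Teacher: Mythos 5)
Your proposal is correct and follows essentially the same route as the paper: subtract the two IVPs to obtain $\et'(t) = -A\et(t) + \vareps\yt''(t)$ with $\et(0)=0$, apply the variation-of-constants formula, bound the integrand using \eqref{exp_est}, and evaluate the resulting scalar integral as $t\varphi(-\omega t)$. Your additional remarks on the vanishing initial error and the role of $v^0$ are accurate but not needed beyond what the paper's proof already contains.
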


\begin{proof}
Subtracting the equation in IVP~\eqref{IVP_HM} from the
equation in IVP~\eqref{IVP} and taking into account initial conditions
for $y(t)$ and $\yt(t)$, we see that $\et(t)$ satisfies an IVP
$$
\et'(t) = -A \et(t) + \vareps\yt''(t), \quad \et(0)=0.
$$
Then by the variation-of-constants formula
(see, for instance, \cite[Section~I.2.3]{HundsdorferVerwer:book} or 
\cite{HochbruckOstermann2010}) and inequality~\eqref{exp_est},
we have, for $t\geqs 0$,
\begin{align*}
\et(t)     &= \int_0^t e^{-(t-s)A}\vareps\yt''(s)ds, 
\\
\|\et(t)\| &\leqs \vareps\int_0^t \|e^{-(t-s)A}\|\,\|\yt''(s)\|ds
\leqs \vareps\int_0^t \|e^{-(t-s)A}\|ds \,\max_{s\in[0,t]}\|\yt''(s)\|
\\
&\leqs \vareps C \int_0^t e^{-\omega(t-s)} ds \,\max_{s\in[0,t]}\|\yt''(s)\|
= \vareps C t\varphi(-\omega t) \max_{s\in[0,t]}\|\yt''(s)\|.
\end{align*}
\end{proof}

Let now the IVPs~\eqref{IVP} and~\eqref{IVP_HM} be solved
for $t\in[t_n,t_{n+1}]$ with exact initial values
$y(t_n)$ and $\yt(t_n)=y(t_n)$, $\yt'(t_n)=y'(t_n)$, respectively.    
Let, furthermore, $\et^{n+1}(\tau)$ be the HM error defined by~\eqref{et},
with $t\geqs t_n$,
for the solutions of these IVPs.
Applying the estimate~\eqref{et_est} to $\et^{n+1}(\tau)$, it is not
difficult to see that
\begin{equation}
\label{etn_est}
\|\et^{n+1}(\tau)\| \leqs C\vareps \tau\varphi(-\omega\tau) 
\max_{s\in[t_n,t_{n+1}]}\|\yt''(s)\|.
\end{equation}

At this point, it is convenient to introduce 
truncation errors of the central finite differences appearing 
in the HM scheme: 
\begin{equation}
\label{eps1_2}
\eps_1^n \equiv \yt'(t_n) - \frac{\yt^{n+1}-\yt^{n-1}}{2\tau},
\quad
\eps_2^n  \equiv \yt''(t_n) - \frac{\yt^{n+1} -2\yt^n + \yt^{n-1}}{\tau^2},
\quad  n=0,1,2,\dots \, .
\end{equation}
It is not difficult to check, see, 
e.g., \cite[Chapter~II, Section~1]{Samarskii_book}, that,
provided $\yt(t)$ is a sufficiently smooth function, 
for any $t\geqs 0$ and $\tau>0$ there exist points $\eta_{1,2}\in[t-\tau, t+\tau]$ 
for which
\begin{align*}
\frac{\yt(t+\tau) - \yt(t-\tau)}{2\tau} 
&= \yt'(t) + \frac{\tau^2}{6}\yt'''(\eta_1), 
\\
\frac{\yt(t+\tau) - 2\yt(t) + \yt(t-\tau)}{\tau^2} 
&= \yt''(t) + \frac{\tau^2}{12}\yt^{(4)}(\eta_2).
\end{align*}
Thus, for every $n=0,1,2,\dots$, there exist points 
$\eta_{1,2}^n\in[t_{n-1},t_{n+1}]$ such that
\begin{equation}
\label{eps1_2a}
\eps_1^n = - \frac{\tau^2}{6}\yt'''(\eta_1^n),
\quad
\eps_2^n = - \frac{\tau^2}{12}\yt^{(4)}(\eta_2^n).
\end{equation}

\begin{proposition}
Assume the HM time integration scheme~\eqref{HM} is applied to
solve IVP~\eqref{IVP} and the exact solution~$y(t)$ of IVP~\eqref{IVP} 
is a sufficiently smooth function.
Then for the local error $e^{n+1}$ of the HM scheme, defined by~\eqref{le_def},
holds, for $n=1,2,\dots$,
\begin{equation}
\label{le}
\begin{gathered}
e^{n+1} = C_1 \frac{2\tau^4}{\tau + 2\vareps} 
+ C_2 \frac{2\tau^4\vareps}{\tau + 2\vareps} 
+ \et^{n+1}(\tau),
\\
C_1 = \frac{1}{6}\yt'''(\eta_1^n), \quad
C_2 = \frac{1}{12}\yt^{(4)}(\eta_2^n),
\end{gathered}
\end{equation}
where $\eta_{1,2}^n$ are certain points in the closed interval 
$[t_{n-1},t_{n+1}]$ and $\et^{n+1}(\tau)$ is the HM error, defined
as discussed before relation~\eqref{etn_est}, and for which
holds
$$
\|\et^{n+1}(\tau)\| \leqs C\vareps \tau\varphi(-\omega\tau) 
\max_{s\in[t_n,t_{n+1}]}\|\yt''(s)\|.
$$ 
\end{proposition}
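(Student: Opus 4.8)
The plan is to split the local error through the exact solution $\yt$ of the HM approximation~\eqref{IVP_HM}. Writing $e^{n+1}=\bigl[y(t_{n+1})-\yt(t_{n+1})\bigr]+\bigl[\yt(t_{n+1})-y^{n+1}\bigr]$, the first bracket is exactly the HM error $\et^{n+1}(\tau)$, whose bound~\eqref{etn_est} is already in hand; so the whole task reduces to analysing the second bracket, which is the discretization error of the HM scheme measured against the exact HM-approximation solution $\yt$. I would therefore aim to show that this second bracket equals the two truncation terms displayed in~\eqref{le}.

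First I would verify that $\yt$ satisfies the HM scheme up to the central-difference truncation errors. Evaluating the identities~\eqref{eps1_2} at $t_n$ and using that $\yt$ solves~\eqref{IVP_HM}, i.e.\ $\yt'(t_n)+\vareps\yt''(t_n)=-A\yt(t_n)+f^n$, one obtains
$$\frac{\yt(t_{n+1})-\yt(t_{n-1})}{2\tau}+\vareps\,\frac{\yt(t_{n+1})-2\yt(t_n)+\yt(t_{n-1})}{\tau^2}=-A\yt(t_n)+f^n-\eps_1^n-\vareps\eps_2^n,$$
so that the exact HM-approximation solution obeys the scheme with residual $-\eps_1^n-\vareps\eps_2^n$ on the right. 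Subtracting the HM scheme~\eqref{HM} satisfied by the computed $y^{n+1}$, and using that the two starting values coincide with $\yt$ at $t_{n-1}$ and $t_n$, the differences at those two levels cancel, and in particular the term $-A\bigl(\yt(t_n)-y^n\bigr)$ drops out. What remains is a single algebraic relation for the top-level difference $\yt(t_{n+1})-y^{n+1}$.

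Solving that relation is routine: the coefficient multiplying $\yt(t_{n+1})-y^{n+1}$ is $\tfrac{1}{2\tau}+\tfrac{\vareps}{\tau^2}=\tfrac{\tau+2\vareps}{2\tau^2}$, which is exactly where the characteristic denominator $\tau+2\vareps$ enters. Dividing through gives $\yt(t_{n+1})-y^{n+1}=\tfrac{2\tau^2}{\tau+2\vareps}\bigl(-\eps_1^n-\vareps\eps_2^n\bigr)$, and substituting the mean-value forms~\eqref{eps1_2a}, namely $-\eps_1^n=\tfrac{\tau^2}{6}\yt'''(\eta_1^n)$ and $-\eps_2^n=\tfrac{\tau^2}{12}\yt^{(4)}(\eta_2^n)$, reproduces the two terms $C_1\tfrac{2\tau^4}{\tau+2\vareps}+C_2\tfrac{2\tau^4\vareps}{\tau+2\vareps}$ with $C_1,C_2$ as stated. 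Adding back $\et^{n+1}(\tau)$ and quoting~\eqref{etn_est} then yields~\eqref{le} together with its accompanying bound.

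The step I expect to be the main obstacle is not the algebra but the bookkeeping of the starting values, i.e.\ keeping the two error components mutually consistent. The clean cancellation in the previous paragraph requires the scheme's lower-level inputs to agree with $\yt$ at both $t_{n-1}$ and $t_n$, whereas $\yt$ was introduced with data matched to $y$ at $t_n$ only; one has to ensure that the discrepancy at $t_{n-1}$ is attributed to the modeling error $\et^{n+1}(\tau)$ rather than left as a spurious extra term, so that the exact identity~\eqref{le}, and not merely an asymptotic version of it, comes out. Establishing this consistent splitting carefully is the delicate part of the argument.
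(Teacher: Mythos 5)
Your proof is correct and follows essentially the same route as the paper: the same splitting $e^{n+1}=\et^{n+1}(\tau)+\bigl(\yt(t_{n+1})-y^{n+1}\bigr)$, followed by writing the HM scheme for the exact values $y^{n-1}=\yt(t_{n-1})$, $y^n=\yt(t_n)$ and inserting the truncation-error forms~\eqref{eps1_2a}, with the denominator $\tau+2\vareps$ arising from the coefficient $\tfrac{1}{2\tau}+\tfrac{\vareps}{\tau^2}$ exactly as you compute. The bookkeeping issue you flag at the end is real but is resolved just as you suggest: the paper simply takes the scheme's lower-level inputs to be $\yt(t_{n-1})$ and $\yt(t_n)$, so the entire discrepancy between $y$ and $\yt$ is carried by the term $\et^{n+1}(\tau)$.
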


\begin{proof}
Note that
$$
e^{n+1} = \underbrace{y(t_{n+1}) - \yt(t_{n+1})}_{\et^{n+1}(\tau)} 
+  \yt(t_{n+1}) - y^{n+1}.
$$
The rest of the proof is straightforward: we write down the HM scheme 
for exact values 
$y^{n-1}=\yt(t_{n-1})$, $y^n=\yt(t_n)$ and apply 
relations~\eqref{eps1_2a}.
\end{proof}

From~\eqref{le} we see that the local error of the HM scheme 
with respect to the HM approximation~\eqref{IVP_HM} has order~$4$ 
for $\vareps$ kept constant.  Below we discuss a choice $\vareps=\tilde{C}\tau$,
for which the local error of the HM scheme has order~$3$.
Hence, we expect that the global (i.e., computed after many
steps at the final time) time error of the scheme
with respect to the HM approximation~\eqref{IVP_HM}
should converge either with order~$3$ (for constant $\vareps$),
or with order~$2$ (for $\vareps=\tilde{C}\tau$).

It is important to realize that the estimates~\eqref{et_est},\eqref{etn_est}
for the HM error hold for any time interval length $t$
(respectively, for any $\tau>0$), i.e., the HM scheme is first
order consistent with respect to the original problem~\eqref{IVP}
for constant $\vareps$ and second order consistent
for $\vareps=\tilde{C}\tau$.

\subsection{Stability of the HM scheme}
\label{ss:stab}
In this note stability is investigated only with respect to the initial
data, i.e., it is assumed that problem~\eqref{IVP} is homogeneous 
($f\equiv 0$).  Stability with respect to the source function~$f$ 
can then be derived by standard considerations, 
see, e.g.,~\cite{GodunRyab,SamarskiiGulin_book1973}.

Introducing vectors
$$
Y^n = 
\begin{bmatrix}
y^n \\ y^{n-1}
\end{bmatrix}\in\Rr^{2N}, \quad n=1,2,\dots,
$$
we can write the HM scheme~\eqref{HM} as
\begin{equation}
\label{HMop}
\begin{gathered}
Y^{n+1} = S Y^n, \quad n=1,2,\dots,
\\
S =
\begin{bmatrix}
\dfrac{4\epst}{1+2\epst}I - \dfrac{2\tau}{1+2\epst} A & 
\dfrac{1-2\epst}{1+2\epst}I
\\
I & 0
\end{bmatrix}\in\Rr^{2N\times 2N},
\quad \epst=\frac{\vareps}{\tau}>0,
\end{gathered}
\end{equation}
where $I$ is the identity $N\times N$ matrix. 

\begin{proposition}
\label{p:stab}
If the matrix $A$ in~\eqref{HM} is symmetric positive definite
then the HM scheme~\eqref{HM} is stable in the sense that the
eigenvalues of its amplification matrix $S$ in~\eqref{HMop}
are smaller than~$1$ in absolute value
if and only if 
\begin{equation}
\label{stab2}
\vareps > \tau^2\dfrac{\lambda_{\max}}4
\quad\Leftrightarrow\quad
\tau<\sqrt{\frac{4\vareps}{\lambda_{\max}}},
\end{equation}
where $\lambda_{\max}$ is the largest eigenvalue of~$A$.

If the matrix $A$ in~\eqref{HM} is symmetric positive semidefinite
then each zero eigenvalue $\lambda_j=0$ of $A$ leads to a pair
eigenvalues $\xi_{1,2}^{(j)}$ in the amplification matrix~$S$,
\begin{equation}
\label{zero_eig}
\xi_1^{(j)}=1,\quad 
\xi_2^{(j)}=-\frac{1-2\epst}{1+2\epst}, \quad 
\epst=\frac{\vareps}{\tau}>0,
\end{equation} 
and all the eigenvalues
$\xi_{1,2}^{(j)}$ of~$S$, for all~$j$, are simple.
The other eigenvalues of~$S$ are smaller in absolute
value than~$1$ if and only if~\eqref{stab2} holds.    
\end{proposition}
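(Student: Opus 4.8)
The plan is to exploit the structure of the amplification matrix $S$ in~\eqref{HMop}: all four of its $N\times N$ blocks are polynomials in the single matrix $A$, so they are simultaneously diagonalizable. Writing the spectral decomposition $A = Q\Lambda Q^T$ with $Q$ orthogonal and $\Lambda = \mathrm{diag}(\lambda_1,\dots,\lambda_N)$, I would conjugate $S$ by $\mathrm{diag}(Q,Q)$ and then apply the permutation that groups the $j$-th component of each block together. This turns $S$ into a block-diagonal matrix whose diagonal blocks are the $2\times 2$ matrices
\[
S_j = \begin{bmatrix} \dfrac{4\epst - 2\tau\lambda_j}{1+2\epst} & \dfrac{1-2\epst}{1+2\epst} \\[2mm] 1 & 0 \end{bmatrix}, \qquad j=1,\dots,N.
\]
Since conjugation and permutation leave the spectrum unchanged, the eigenvalues of $S$ are exactly the union of the eigenvalues of the $S_j$, and the whole statement reduces to independent scalar problems parametrized by the individual $\lambda_j$.

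For each $j$ the characteristic polynomial of $S_j$ has the form $\xi^2 + p_j\xi + q_j = 0$ with
\[
p_j = -\operatorname{tr} S_j = \frac{2\tau\lambda_j - 4\epst}{1+2\epst}, \qquad q_j = \det S_j = \frac{2\epst - 1}{1+2\epst}.
\]
To decide when both roots of this real quadratic lie strictly inside the unit disk, I would invoke the Schur--Cohn (Jury) criterion, which for $\xi^2 + p\xi + q$ reads $|q| < 1$ together with $|p| < 1 + q$. A short computation shows that $|q_j| < 1$ holds automatically for every $\epst > 0$, while $1 + q_j = 4\epst/(1+2\epst)$, so the second condition becomes $|2\tau\lambda_j - 4\epst| < 4\epst$, i.e. $0 < \tau\lambda_j < 4\epst$. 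In the positive definite case $\lambda_j > 0$ makes the left inequality automatic, and the right one, $\tau\lambda_j < 4\epst = 4\vareps/\tau$, is equivalent to $\lambda_j < 4\vareps/\tau^2$. Requiring this for every $j$, hence for $\lambda_{\max}$, gives precisely~\eqref{stab2}; conversely~\eqref{stab2} restores it for all $j$, so the equivalence is both necessary and sufficient.

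For the semidefinite case I would treat the blocks with $\lambda_j = 0$ by hand. Substituting $\lambda_j = 0$ into $S_j$ and testing $\xi = 1$ in its characteristic polynomial shows that $\xi_1^{(j)} = 1$ is a root; the product-of-roots relation $\xi_1^{(j)}\xi_2^{(j)} = q_j$ then forces $\xi_2^{(j)} = (2\epst - 1)/(1+2\epst) = -(1-2\epst)/(1+2\epst)$, which is~\eqref{zero_eig}. These two values are distinct for every $\epst > 0$, so each zero-eigenvalue block is non-defective; and the computation of the previous paragraph shows that $\xi = 1$ can occur \emph{only} when $\lambda_j = 0$, so the neutral eigenvalue is carried exactly by the kernel modes and is semisimple. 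The main obstacle is this boundary situation: once $\lambda_j = 0$ the strict Schur--Cohn inequalities fail, so stability can no longer be read off from the mere location of eigenvalues, and one must verify diagonalizability of the corresponding blocks explicitly to guarantee that the powers $S^n$ stay bounded on the kernel subspace.
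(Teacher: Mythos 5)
Your proof is correct, and it follows the paper's reduction exactly up to the key step: both arguments conjugate $S$ by $\mathrm{diag}(Q,Q)$ and a permutation to obtain the $2\times 2$ blocks $S_j$, so that everything reduces to locating the roots of $\xi^2+p_j\xi+q_j$ with $p_j=(2\tau\lambda_j-4\epst)/(1+2\epst)$, $q_j=(2\epst-1)/(1+2\epst)$. Where you diverge is in how that scalar root-location problem is settled. The paper computes the roots explicitly from the quadratic formula, splits into the complex-conjugate case (handled via $|\xi_1\xi_2|=|q_j|<1$) and the real case, and then manipulates the inequalities $\xi_1<1$ and $-1<\xi_2$ directly on expressions containing the radical. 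You instead invoke the Schur--Cohn (Jury) test $|q|<1$, $|p|<1+q$, which packages $f(1)>0$, $f(-1)>0$ and the determinant bound into one criterion and dispenses with the real/complex case distinction and the radical manipulations altogether; the resulting computation $|2\tau\lambda_j-4\epst|<4\epst\Leftrightarrow 0<\tau\lambda_j<4\epst$ is shorter and makes it transparent that $\xi=1$ occurs exactly when $\lambda_j=0$ (since $f_j(1)=2\tau\lambda_j/(1+2\epst)$). The paper's route has the advantage of producing the explicit root formula~\eqref{xi12}, which it reuses later for the eigenvalue-gap estimate~\eqref{diff_est}; your route is the cleaner way to prove the proposition in isolation. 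Your treatment of the semidefinite case (testing $\xi=1$, recovering $\xi_2$ from the product of roots, and noting the two roots are distinct so each kernel block is diagonalizable) matches the paper's conclusion~\eqref{zero_eig}, with the minor caveat that \emph{simple} should be read per block: if $A$ has a multiple zero eigenvalue, $\xi=1$ is a semisimple, not algebraically simple, eigenvalue of $S$ — a wording issue in the proposition itself rather than a gap in your argument.
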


\begin{remark}
Note that, since $\|A\|_2=\lambda_{\max}$ for symmetric positive semidefinite 
$A$, condition~\eqref{stab2} coincides with the Samarskii
sufficient stability condition~\eqref{stab1} which is 
obtained for a special vector norm by operator inequality
techniques.
The inequality~\eqref{stab2} can also be derived within assumptions 
of the Fourier-based (von Neumann stability) 
techniques~\cite{GottliebGustafsson1976,CoremDitkowski2012}.  
\end{remark}

\begin{proof}
Since the matrix $A$ in~\eqref{IVP} is real and symmetric, there exists
$Q\in\Rr^{N\times N}$ with orthonormal columns, which are the eigenvectors
of $A$, such that
\begin{equation}
\label{diag}
Q^TAQ = \Lambda = \mathop{\mathrm{Diag}} 
\begin{pmatrix}
\lambda_1 , \dots, \lambda_N 
\end{pmatrix}.
\end{equation}
Here $\Lambda$ is a diagonal $N\times N$ matrix whose entries
$\lambda_1$, \dots, $\lambda_N$ are the nonnegative eigenvalues of $A$. 
It is easy to check that
$$
\widehat{Q}^T S \widehat{Q} = 
\begin{bmatrix}
\dfrac{4\epst}{1+2\epst}I - \dfrac{2\tau}{1+2\epst} \Lambda & 
\dfrac{1-2\epst}{1+2\epst}I
\\
I & 0
\end{bmatrix},
\quad\text{for}\quad
\widehat{Q}= 
\begin{bmatrix}
Q & 0 \\
0 & Q
\end{bmatrix},
$$ 
and a permutation matrix $\widehat{P}\in\Rr^{2N\times 2N}$ exists
such that the matrix $\widehat{P}^T\widehat{Q}^TS\widehat{Q}\widehat{P}$
is a block diagonal with $2\times 2$ diagonal blocks
\begin{equation}
\label{Sj}
S_j=\begin{bmatrix}
\dfrac{4\epst - 2\mu_j}{1+2\epst} & 
\dfrac{1-2\epst}{1+2\epst}
\\
1 & 0
\end{bmatrix}, 
\quad \mu_j = \tau\lambda_j,
\quad j=1,\dots,N.
\end{equation}
Thus, the matrix 
$\widehat{P}^T\widehat{Q}^TS\widehat{Q}\widehat{P}$ is similar to~$S$
and the spectrum of~$S$ is formed by the union of the eigenvalues 
of the $2\times 2$ matrices $S_j$, for $j=1,\dots,N$.
In the rest of the proof, whenever convenient, we omit the subindex
$j$ in $\lambda_j$ and $S_j$.
The characteristic polynomial for each of the matrices~$S_j$
and its roots read, respectively, 
\begin{gather}
\notag
(1+ 2\epst)\xi^2 + (2\mu - 4\epst)\xi + (2\epst - 1) = 0,
\quad \text{with}\;\mu=\tau\lambda,
\\
\label{xi12}
\xi_{1,2} = 
\frac{2\epst - \mu \pm\sqrt{ (2\epst - \mu)^2 + 1 - 4\epst^2}}
{1+ 2\epst}.
\end{gather}
It is easy to see check that for $\lambda=0$ the eigenvalues
$\xi_{1,2}$ are given by~\eqref{zero_eig}.  
Now consider the eigenvalues $\xi_{1,2}$ corresponding to
the positive eigenvalues $\lambda>0$, so that $\mu>0$.
We have $|\xi_1\xi_2|= |(2\epst - 1)/(1 + 2\epst)|<1$ for $\epst>0$.
Hence, if $\xi_{1,2}$ are not real then $\xi_1=\bar{\xi}_2$ 
and $|\xi_1|=|\xi_2|<1$.
Therefore $|\xi_{1,2}|\geqs 1$ is possible only if $\xi_{1,2}\in\Rr$.
Assume $\xi_{1,2}\in\Rr$ and let $\xi_1\geqs \xi_2$,
so that $\xi_1$ corresponds to the plus sign
in the formula for $\xi_{1,2}$ above.
Thus, we have to check when $-1<\xi_2\leqs\xi_1<1$.
Imposing condition $\xi_1<1$, we get
\begin{gather*}
2\epst - \mu + \sqrt{ (2\epst - \mu)^2 + 1 - 4\epst^2} < 1+ 2\epst
\quad\Leftrightarrow\quad
2\mu(1+2\epst)>0,
\end{gather*}
which holds for all $\mu>0$ and $\epst>0$.
Imposing the other stability condition, $-1<\xi_2$, we have
\begin{gather*}
1 + 2\epst + 2\epst - \mu > \sqrt{ (2\epst - \mu)^2 + 1 - 4\epst^2} 
\quad\Leftrightarrow
\\
2\epst + 4\epst^2 + (1+2\epst)(2\epst + \mu) > 0
\quad\Leftrightarrow\quad
4\epst>\mu
\quad\Leftrightarrow\quad
4\frac{\vareps}{\tau}>\tau\lambda.
\end{gather*}
Requiring that the last inequality holds for all $\lambda=\lambda_j$,
we arrive at condition~\eqref{stab2}. 
\end{proof}

We note that, bringing the amplification matrix~$S$ to a block-diagonal form 
with $2\times 2$ blocks~$S_j$ (see relations~\eqref{HMop},\eqref{Sj}),
we follow~\cite{CoremDitkowski2012}, where this technique is
employed for a detailed stability analysis of the Du Fort--Frankel schemes.

\begin{figure}
\includegraphics[width=0.48\linewidth]{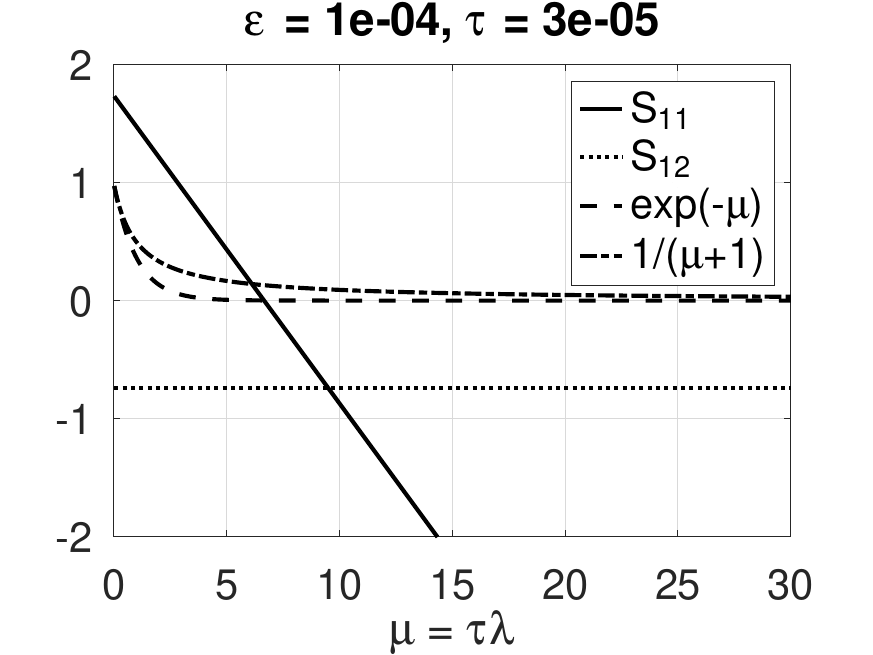}%
\includegraphics[width=0.48\linewidth]{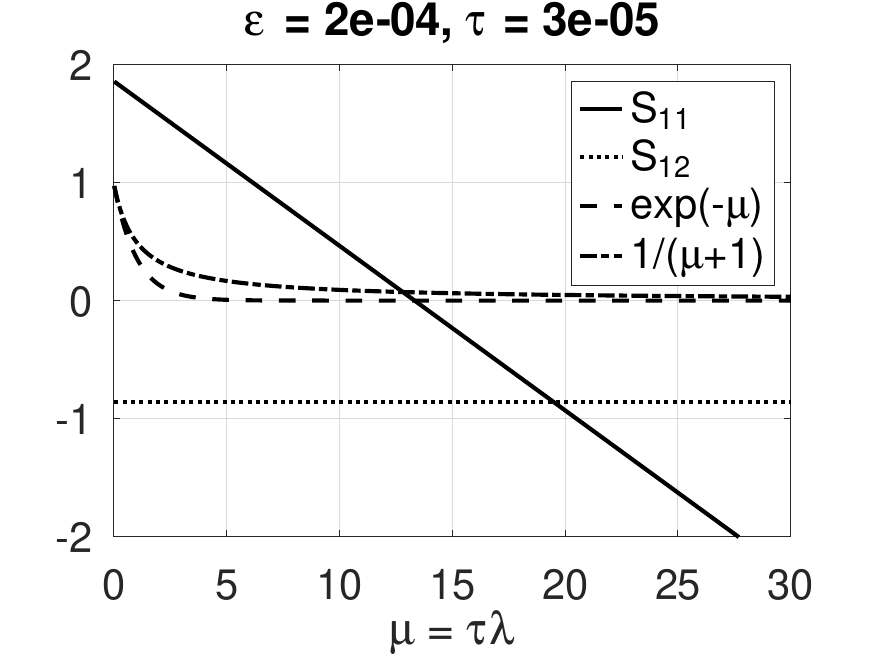}\\[1ex]
\includegraphics[width=0.48\linewidth]{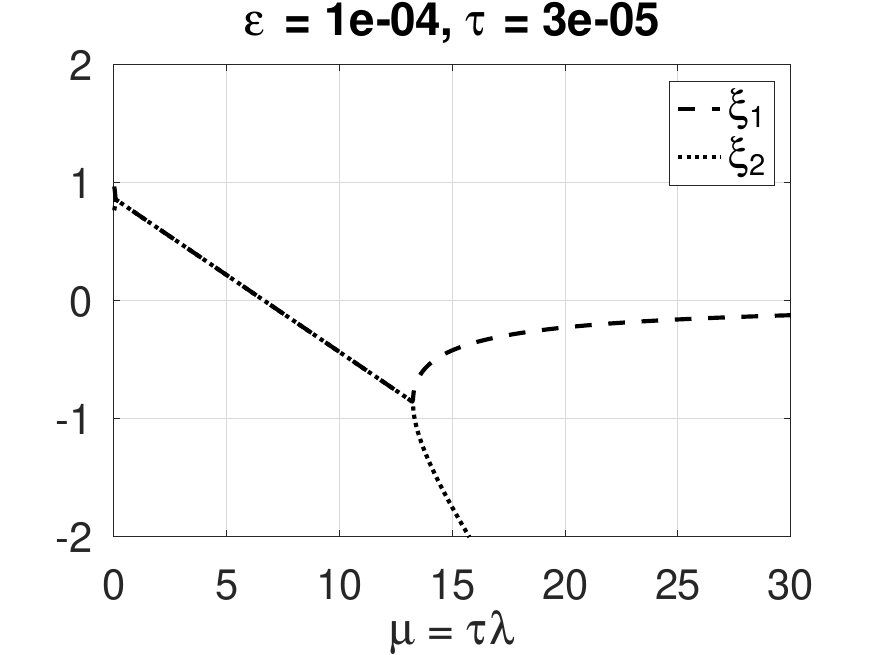}%
\includegraphics[width=0.48\linewidth]{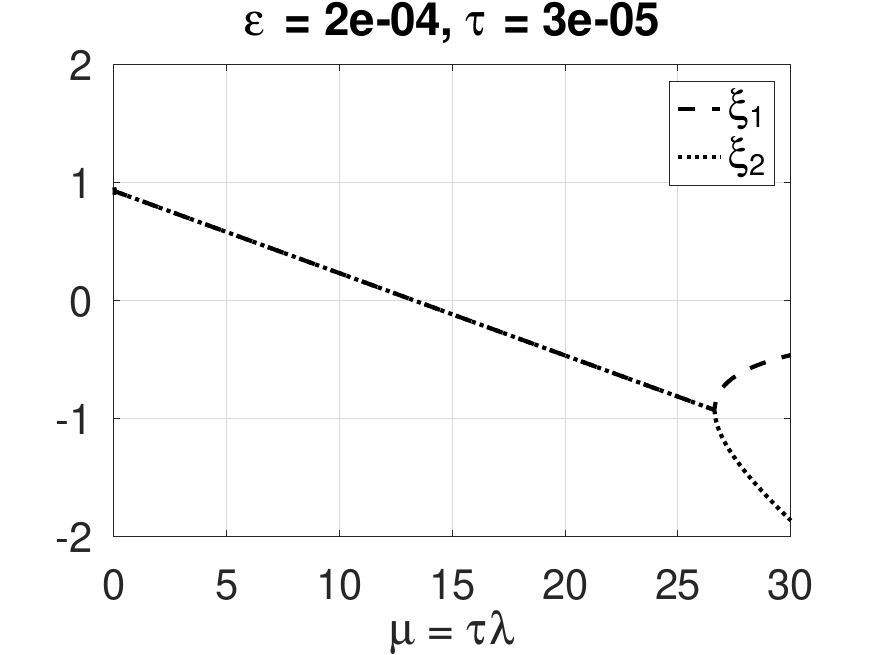}
\caption{Top plots: the first row entries $S_{11}$, $S_{12}$ of the 
amplification matrix~$S$ versus a range of~$\mu=\tau\lambda$
for $\tau=3\cdot10^{-5}$, $\vareps=2\cdot10^{-4}$ (left plot)
and $\vareps=1\cdot10^{-4}$ (right plot).
Bottom plots: the real parts of the eigenvalues~$\xi_{1,2}$ of the 
amplification matrix versus the same $\mu$ range,
for the same $\vareps$ and $\tau$ values as at the top plots.
The curves for $\xi_1$ and $\xi_2$ coincide if $\xi_{1,2}\in\Cc$.}
\label{f:Sij}
\end{figure}

\begin{remark}
\label{r:monot}
The matrix~$S=S_j$ can, in fact, be seen as a stability function $S=S(\mu)$
of the HM scheme and, hence, it is instructive to have a closer look
at its entries.  The two top plots in Figure~\ref{f:Sij} show 
the first row entries
of the matrix~$S$ versus a range of $\mu$.  
As a reference, at these plots we also show the exact solution
operator $e^{-\mu}$ and the stability function $1/(\mu+1)$ 
of the implicit Euler scheme. 
Inspecting these plots, we see that the HM is not monotone, i.e.,
it does not guarantee that its solution remains nonnegative
for initial vectors $y^0$, $y^1$ with nonnegative entries.
Indeed, since $S_{12}$ does not depend on~$\mu$ and can be negative,
the values $y^0>0$ and $y^1>0$ can be chosen such that 
$y^2 = S_{11}y^1 + S_{12}y^0<0$ even for arbitrarily small $\tau>0$.  
\end{remark}

At the two bottom plots in Figure~\ref{f:Sij}, 
the eigenvalues~\eqref{xi12} of the amplification matrix $S$ 
are shown, for the same parameters as in the corresponding top plots.
As we see, for some $\mu$ the entry $S_{11}$ can larger than~$1$ in absolute 
value, even though the eigenvalues $|\xi_{1,2}|<1$.  Hence, $\|S\|$
can exceed~1 in this case. 
 
Indeed, 
since the amplification matrix~$S$ is not symmetric, its eigenvalues alone 
do not determine the complete stability picture of the scheme. 
As shown in~\cite{CoremDitkowski2012}, there are stability
problems in the HM scheme when the eigenvalues $\xi_{1,2}$ of the 
matrices~$S_j$ are approximately equal. It is easy to check that
for $\xi_1=\xi_2$ the matrix~$S_j$ is not diagonalizable and,
as will be seen in numerical tests below, 
for $\xi_1\approx\xi_2$ the powers $S_j^n$ (and, hence, $S^n$) 
exhibit a significant growth in norm with the time step number~$n$.  
It is not difficult to check that the growth
of $\|S^n\|$ is proportional to $|\xi_1-\xi_2|^{-1}$.
From~\eqref{xi12} we see that
$$
|\xi_1 - \xi_2| = 
\left|\frac{2\sqrt{ (2\epst - \mu)^2 + 1 - 4\epst^2}}{1+ 2\epst}\right| =
\frac{2\left|\sqrt{ \mu^2 + 1 - 4\epst\mu}\right|}{1+ 2\epst},
$$
so that, for sufficiently small $\mu=\tau\lambda$,
\begin{equation}
\label{diff_est}
|\xi_1 - \xi_2|^{-1}\approx
\frac{1+ 2\epst}{2\left|\sqrt{1 - 4\epst\mu}\right|}
=\frac{\tau+ 2\vareps}{2\tau\left|\sqrt{1 - 4\vareps\lambda}\right|}.
\end{equation}
In practice, this estimate usually turns out to be a good approximation
for a whole range of realistic values of $\mu$.  

\section{Numerical tests}
\label{s:num}
Assuming that the matrix ODE in~\eqref{IVP} stems from a space discretization
of a parabolic PDE, we consider a scalar test IVP which is 
obtained from~\eqref{IVP} by applying the diagonalizing 
transformation~$Q$, see~\eqref{diag}, i.e., the test problem is
\begin{equation}
\label{IVPt}
y'(t) = - A y(t) + f(t), \qquad y(0) =y^0,
\qquad 0\leqs t\leqs T,  
\end{equation}
with $A>0$ being a positive scalar, $f(t)\equiv 0$, and the exact solution 
$y(t)=e^{-tA}$, $y^0=1$.  The corresponding HM approximation~\eqref{IVP_HM} 
has then the exact solution
$$
\yt(t) = C_1 e^{\tilde{\lambda}_1t} + C_2 e^{\tilde{\lambda}_2t} ,
$$
where
\begin{gather*}
\tilde{\lambda}_{1,2} = 
\frac{-1 \pm \sqrt{D}}{2\vareps}, \quad 
D = 1-4A\vareps,
\quad
C_2 = \frac{\tilde{\lambda}_1 + A}{\tilde{\lambda}_1 - \tilde{\lambda}_2},
\quad
C_1 = 1 - C_2,
\end{gather*}
and the parameters are chosen to have $D>0$.  
The initial conditions in~\eqref{IVP_HM} are set according to
the exact solution~$\yt(t)$.
We set $A=10^3$, $\vareps=2\cdot 10^{-4}$, and $T=3\cdot 10^{-3}$. 

First, 
in Figure~\ref{f:HMerr} we plot the error of the HM approximation~\eqref{et},
its upper estimate~\eqref{et_est} and the value $\vareps\yt''(t)$,
used in the estimate, versus time.  The error estimate is computed for 
$\omega=A$ and is, therefore, rather sharp.  This will not be the case
in more realistic settings.  As we see, $\vareps\yt''(t)$ can reach relatively
large values even for this moderate value of $\lambda_{\max}=10^3$
and, in fact, we see in the tests that its maximum is proportional to
$\lambda_{\max}^2$. 

\begin{figure}
\includegraphics[width=0.48\linewidth]{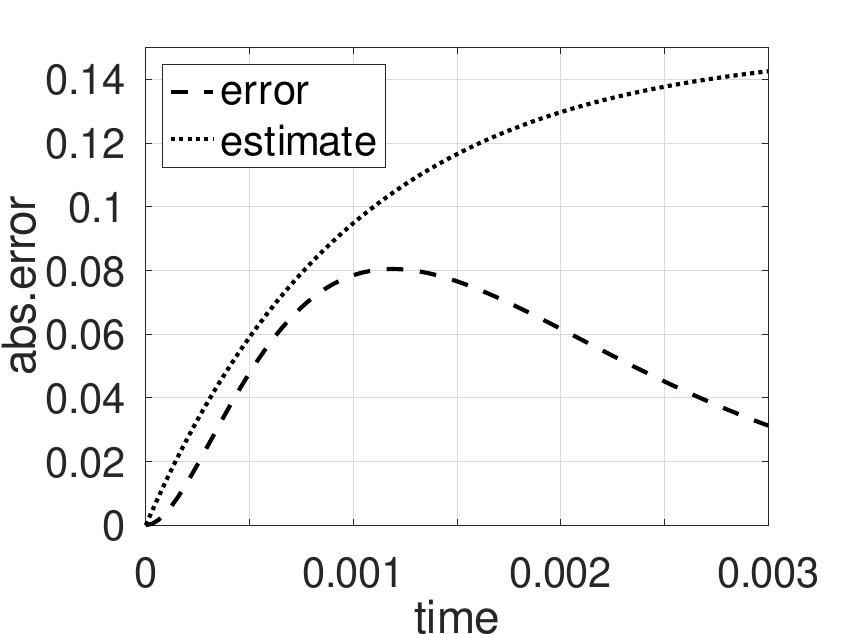}%
\includegraphics[width=0.48\linewidth]{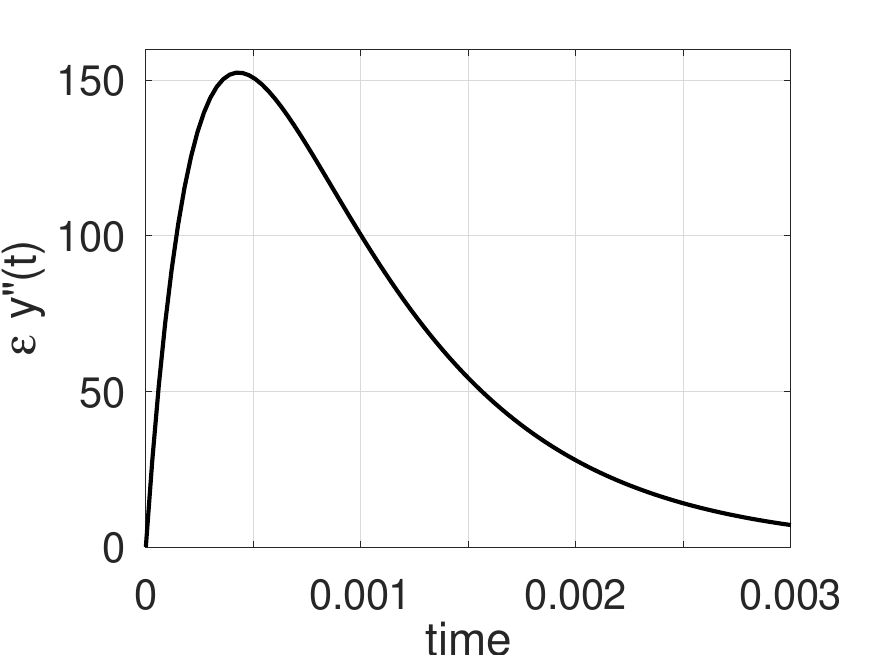}
\caption{Left plot:
the HM error $\et(t)$ and its upper estimate~\eqref{et_est}
versus time.
Right plot: $\vareps\yt''(t)$ versus time.
The plots are obtained for $\vareps=2\cdot10^{-4}$, 
$\omega=\lambda_{\max}=1000$.}
\label{f:HMerr}
\end{figure}

In Figure~\ref{f:loc_glob}, we plot the local and global
(i.e., computed at final time $T=10^{-3}$) numerical errors
of the HM scheme computed for the time step sizes
$\tau=3\cdot10^{-5}$, $\tau/2$, $\tau/4$, and $\tau/8$.
The HM scheme error is computed as the relative error with respect 
to the exact solution~$\yt(t)$ of the HM approximation, 
i.e., the reported error values are $|\yt(t_n) - y^n|/|\yt(t_n)|$.
While the local error exhibits the expected fourth order convergence,
the error at the final time does not converge with order~3, as expected.
It turns out that this convergence order reduction is caused by the growth
of the norms $\|S^n\|$, as $\tau$ decreases, of the HM amplification matrix~$S$.
The actual values $\|S^n\|$ versus time moments $t_n=n\tau$ are shown 
at the left plot in Figure~\ref{f:Sn}.
At the right plot in Figure~\ref{f:Sn}, the values 
$\max_{1\leqs n\leqs T/\tau}\|S^n\|$ are plot versus the 
estimate~\eqref{diff_est} for the reciprocal of the eigenvalue
distance~$|\xi_1-\xi_2|^{-1}$. 
As we see, the estimate~\eqref{diff_est} can serve as a good
indicator for the growing amplification matrix powers. 

\begin{figure}
\includegraphics[width=0.48\linewidth]{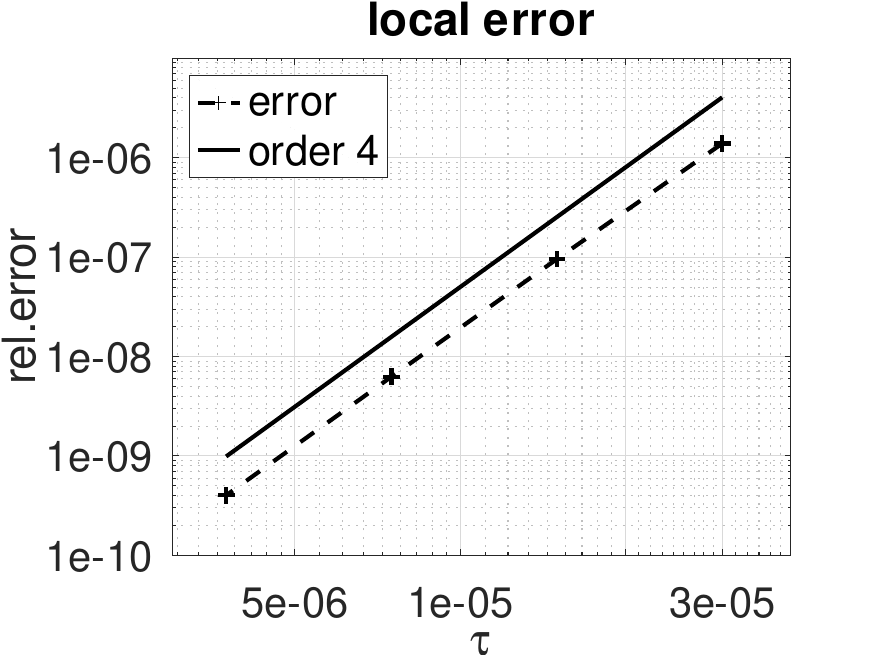}%
\includegraphics[width=0.48\linewidth]{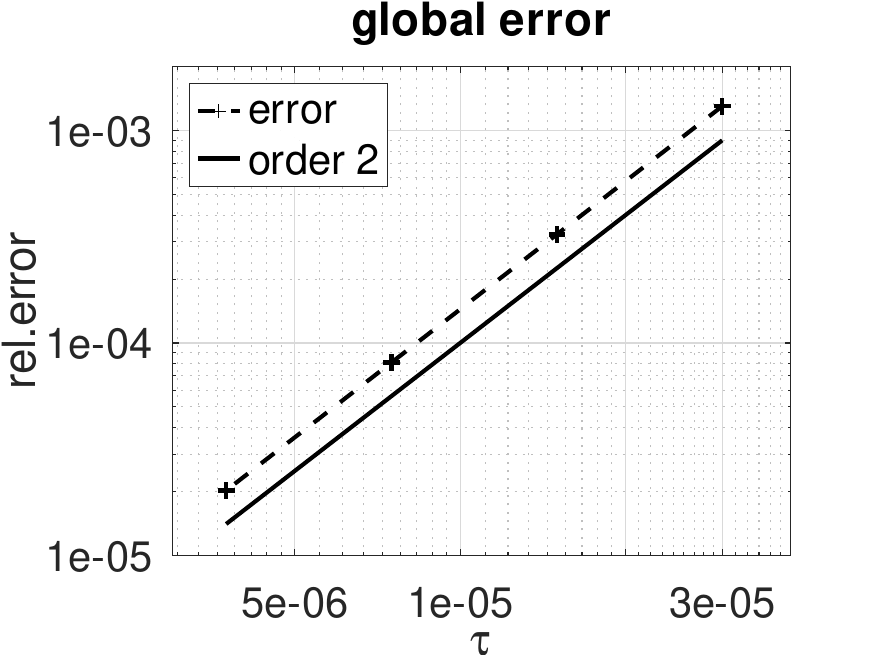}
\caption{Local (left plot) and global (right plot) 
error values of the HM scheme versus the time step size $\tau$.
The plots are obtained for $\vareps=2\cdot10^{-4}$, 
$\lambda_{\max}=1000$, $T=3\cdot10^{-3}$.}
\label{f:loc_glob}
\end{figure}

\begin{figure}
\includegraphics[width=0.48\linewidth]{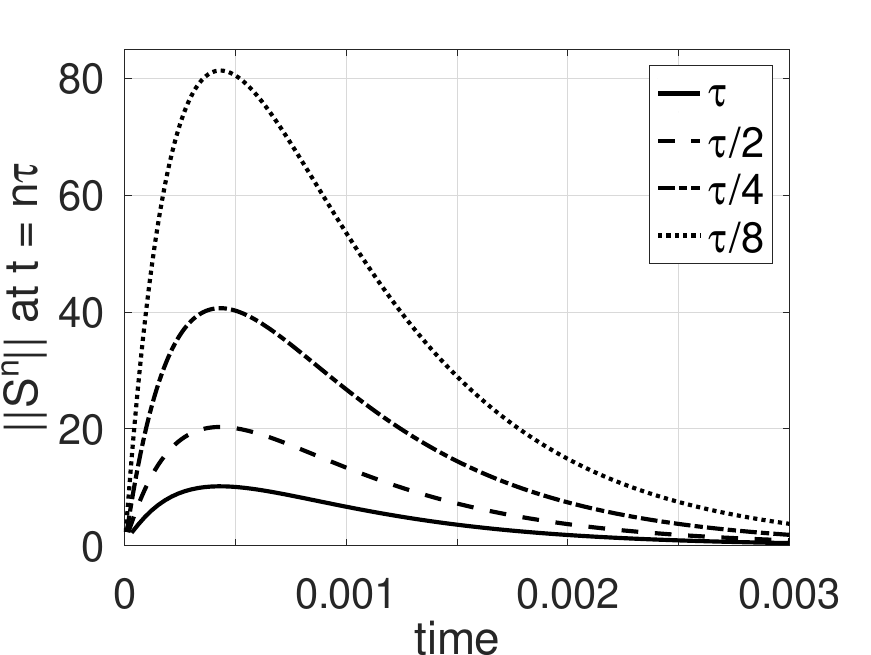}%
\includegraphics[width=0.48\linewidth]{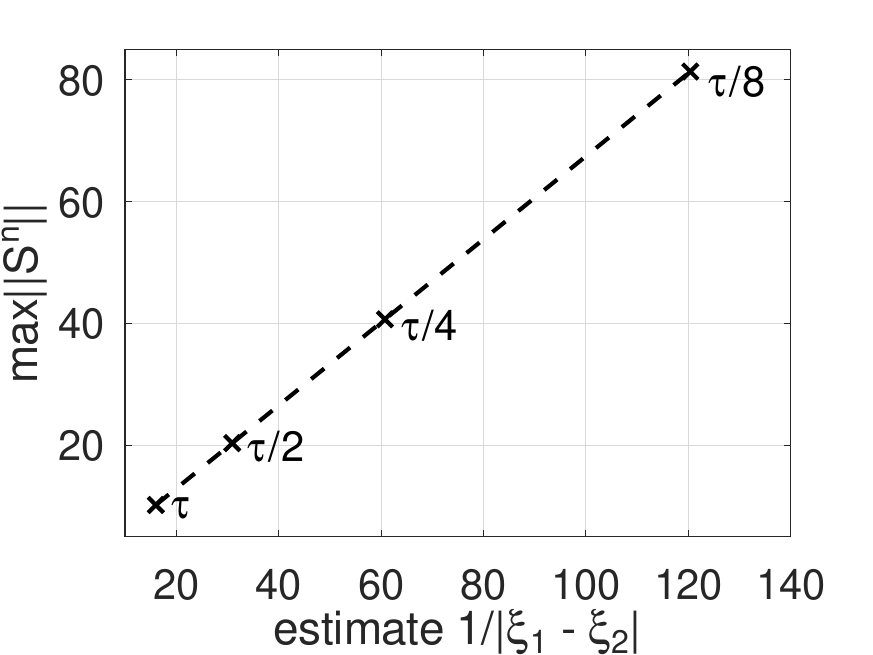}
\caption{Left plot: 
norms $\|S^n\|$ for the amplification matrix $S$ versus time.
Right plot: $\max_{1\leqs n\leqs T/\tau}\|S^n\|$, 
with $n$ being the time step number,
versus the estimate~\eqref{diff_est} for $|\xi_1-\xi_2|^{-1}$.
The plots are obtained for $\vareps=2\cdot10^{-4}$, 
$\lambda_{\max}=1000$, $\tau=3\cdot10^{-5}$.}
\label{f:Sn}
\end{figure}

We now show that our the numerical results presented here for
the scalar test problem~\eqref{IVPt} remain valid for space discretized PDEs.   
In the left plot of Figure~\ref{f:heat1D}, the values $\|S^n\|$
for the amplification matrix $S$ are given for
1D heat equation $u_t = u_{xx}$ on the space interval $x\in(-\pi,\pi)$
with homogeneous Dirichlet boundary conditions, discretized in space
by standard second order finite differences on uniform grid with 
$n_x=100$ points.  Hence, the space grid step size is $h=2\pi/n_x$ 
and we have 
$\|A\|_2=\lambda_{\max}\approx 4h_x^{-2}=n_x^2/\pi^2\approx 1013$.  
In the test runs we use the values $\vareps=2\cdot10^{-4}$,  
$\tau=3\cdot10^{-5}$ for the time step size, and 
$T=3\cdot10^{-3}$ for the final time.
At the right plot of the Figure,
global time errors for the same space discretized heat equation are shown,
where the initial value vector~$y^0$ is formed by the grid values
of the function $\sin(x) + \sin(2x) + \sin(3x)$. 
We observe the same effects as for the scalar test problem
(see the left plot in Figure~\ref{f:Sn} and the right plot
in Figure~\ref{f:loc_glob}): 
$\|S^n\|$ increase with decreasing $\tau$, while the 
global time error
tends to converge with order~2.

\begin{figure}
\includegraphics[width=0.48\linewidth]{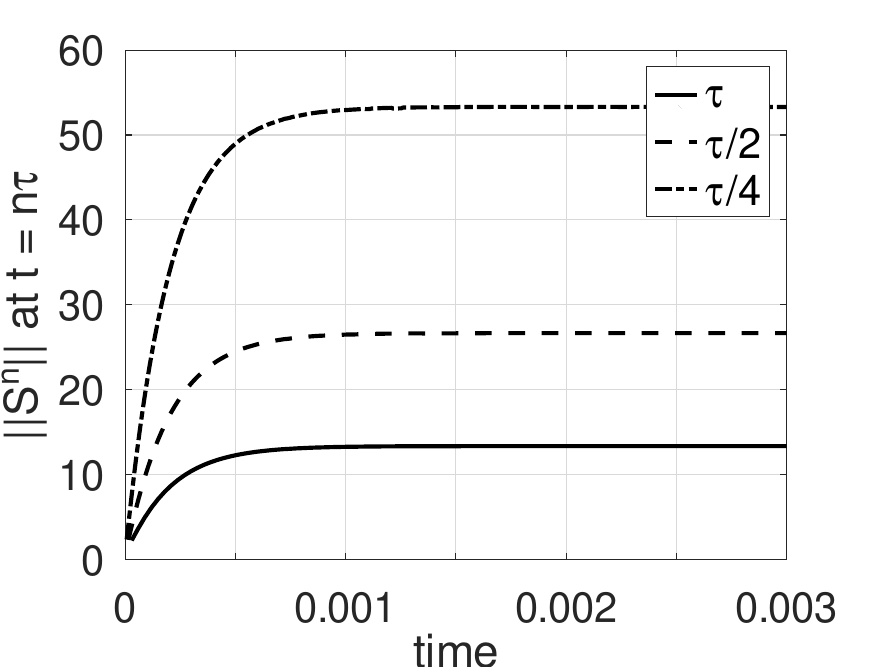}%
\includegraphics[width=0.48\linewidth]{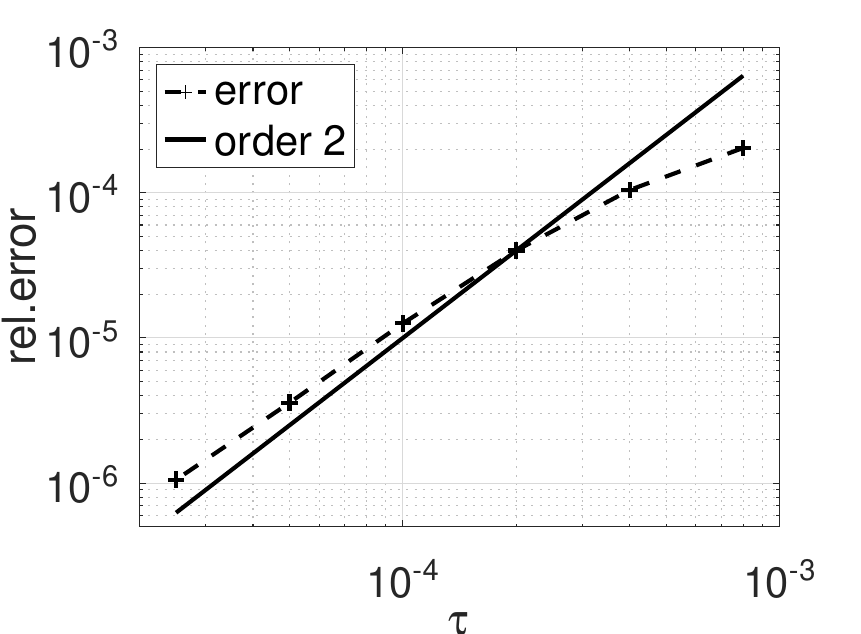}
\caption{Left plot: 
norms $\|S^n\|$ for the amplification matrix $S$ versus time
for the 1D heat equation.
Right plot: 
error values of the HM scheme versus the time step size $\tau$
for the 1D heat equation.
The plots are obtained for $\vareps=2\cdot10^{-4}$, 
$\lambda_{\max}=1033$, $\tau=3\cdot10^{-5}$, $T=3\cdot10^{-3}$.}
\label{f:heat1D}
\end{figure}

From the estimate~\eqref{diff_est}, serving as the 
indicator for $\|S^n\|$, we see that
\begin{equation*}
\label{dt_est}
|\xi_1 - \xi_2|^{-1} = O(\tau^{-1}),
\end{equation*}
which is in accordance with the fact that $\max_{1\leqs n\leqs T/\tau}\|S^n\|$ 
grows with $\tau\rightarrow 0$.  We also see that
by setting $\vareps=\tilde{C}\tau$, for a constant $\tilde{C}>0$,
we can remove the $O(\tau^{-1})$ dependence in~\eqref{diff_est}.
However, by further consideration, this turns out to be not such a 
good idea as it appears at first sight.  There are two reasons
for this.  First, as seen in the proof of Proposition~\ref{p:stab}, 
in real life problems the HM amplification matrix~$S$
is similar to a block-diagonal matrix with $2\times 2$ diagonal
blocks~$S_j$.  These blocks correspond to a set of $\mu_j=\tau\lambda_j$,
formed by all the eigenvalues $\lambda_j\in[0,\lambda_{\max}]$ of~$A$.  
Clearly, $S^n$ is then similar to  a block-diagonal matrix with
diagonal blocks~$S_j^n$.
The dependence of the blocks $S_j$ and its powers on~$\mu$
appears to be rather nontrivial, see Figure~\ref{f:S_mu}.
The plots in Figure~\ref{f:S_mu} show dependence of the norms
$\|S_j^p\|$ on $\mu$ for various~$p$, with the blocks~$S_j$ defined 
by formula~\eqref{Sj}.  Note that the choice $\vareps=\tilde{C}\tau$ 
means that $\epst=\tilde{C}$ is kept constant with decreasing $\tau$.
As the plots in Figure~\ref{f:S_mu} show, the norms
$\|S_j^p\|$ can exhibit a significant growth as $\mu=\tau\lambda$
gets smaller either due to decreasing $\tau$ or for a smaller $\lambda$.
Hence,  
mitigating a growth $S_j^n$ for a certain $\mu_j$ 
by the choice $\vareps=\tilde{C}\tau$ 
can make
a growth $S_i^n$ for some other $\mu_i$ more severe.
This is, apparently, why the powers $S^n$ 
in Figure~\ref{f:heat1D} do not decrease in norm as powers 
of a single block eventually do in Figure~\ref{f:Sn}---the former 
reflect the worst 
case behavior of the blocks~$S_j^n$ for the whole bunch of~$\mu_j$.  

\begin{figure}
\includegraphics[width=0.48\linewidth]{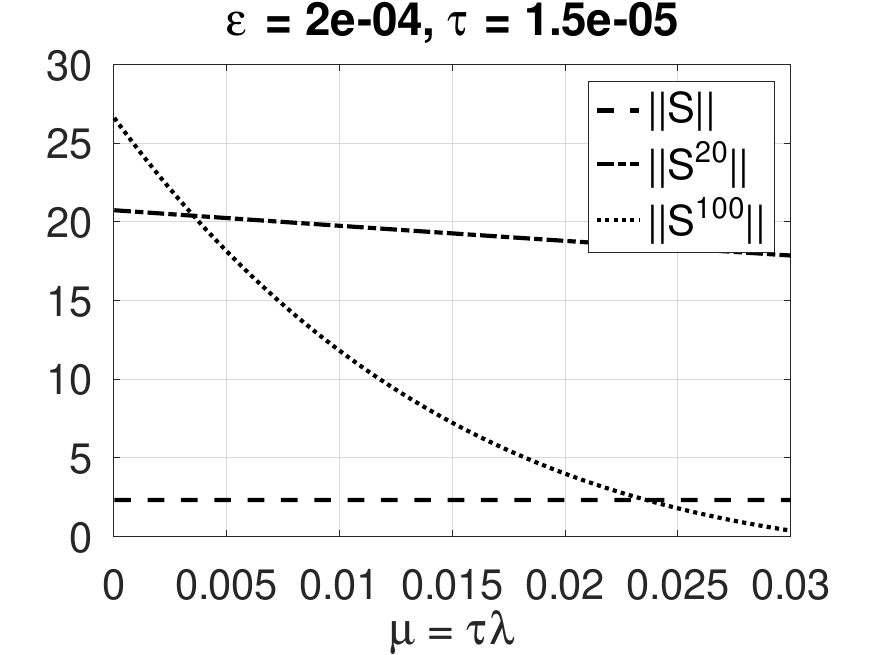}%
\includegraphics[width=0.48\linewidth]{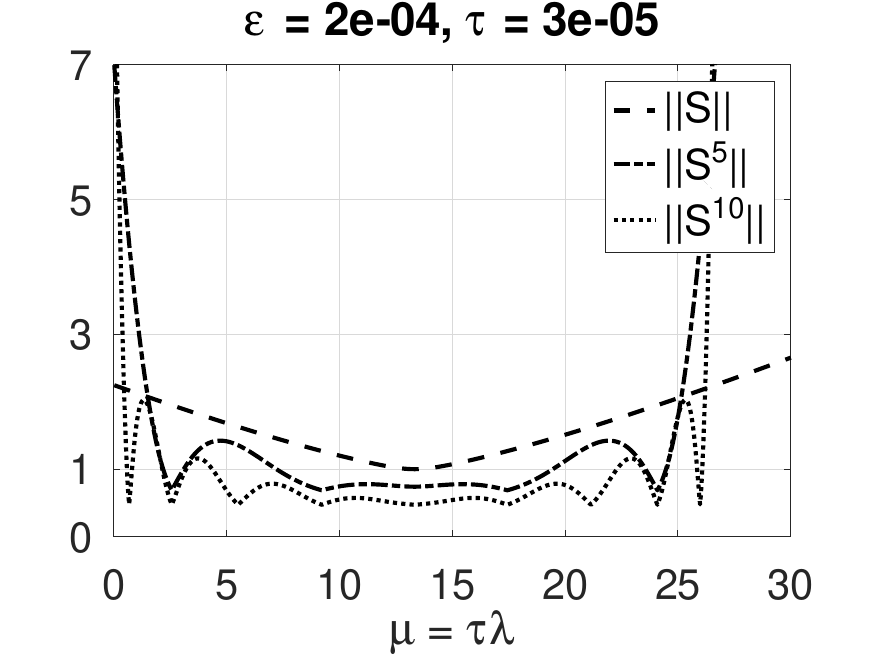}
\caption{A sample dependence of the powers of the $2\times 2$ blocks $S^n$ in
norm on~$\mu=\tau\lambda$.  
Left plot: $\vareps=2\cdot10^{-4}$, $\tau=3\cdot10^{-5}$,
right plot: $\vareps=2\cdot10^{-4}$, $\tau=1.5\cdot10^{-5}$.
Note different ranges of $\mu$ in the two plots.
The stability condition~\eqref{stab2} holds
for $\mu<\mu_{\max}=4\vareps/\tau$ with 
$\mu_{\max}=160/3$ (left plot) and $\mu_{\max}=80/3$ (right plot). 
The right plot corresponds to the $\tau/2=1.5\cdot10^{-5}$ curve
in the left plot of Figure~\ref{f:Sn}, made for $\mu=\lambda\tau=0.015$.}
\label{f:S_mu}
\end{figure}

The other reason why taking $\vareps=\tilde{C}\tau$ may not be a good idea
is that this choice effectively forces the time step size in the HM scheme
be restricted as (cf.~stability condition~\eqref{stab2})
\begin{equation}
\label{expl}
\tau < 4\frac{\vareps/\tau}{\lambda_{\max}} = \frac{4\tilde{C}}{\lambda_{\max}}.
\end{equation}
This restriction is similar to the one in the conventional explicit 
schemes, such as the explicit Euler method.  In particular,
for parabolic problems, where $\lambda_{\max}\approx 4 h^{-2}$, the time
step size in the HM scheme is thus to be restricted as $\tau<\tilde{C}h^2$.
 
Finally, we briefly comment on the 
suggestion from~\cite{ChetverushkinGulin2012},
followed 
in~\cite{ChetverushkinOlkhovskaya2020,CheOlkGas23,ChetverushkinOlkhovskayaGasilov23a}, 
to choose $\vareps=Kh$ with $K$ related to the reciprocal of the 
characteristic process rate (recall that for the heat equation $K$ is 
the heat conduction coefficient, with $\|A\|\approx 4K/h^2$,
cf.~\eqref{stab1a},\eqref{restr1}).
As follows from the accuracy discussions above,
to reach a higher accuracy with $h$ decreasing, typically one has to
decrease~$\tau$ accordingly.  Hence, one should choose, at least, $\tau=O(h)$ 
which, in combination with $\vareps=O(h)$, 
effectively yields the case $\vareps=O(\tau)$ discussed above.
Another possible problem with setting $\vareps=Kh$ is that
$K$ can be quite large, especially in stiff problems,
thus preventing $\vareps$ from being sufficiently small
to keep the HM error bounded.
This may potentially restrict the application area of the HM scheme
to non-stiff or moderately stiff problems.
The last observation seems to be in accordance with the comment made 
in~\cite[Chapter~IV.3.1]{HundsdorferVerwer:book}
for the original Du Fort--Frankel scheme that it can be useful 
mainly for solving problems with small diffusion coefficients
to a low accuracy.

\section{Conclusions}
\label{s:concl}
In this note we have reevaluated accuracy and stability properties
of the HM (hyperbolic model) scheme~\eqref{HM}.  
Formulated in a specific way (see discussion in the Introduction), 
the HM scheme has also been known as the generalized Du Fort--Frankel
scheme~\cite{GottliebGustafsson1976,CoremDitkowski2012}.
The following conclusions can be made.

\begin{enumerate}
\item 
The HM scheme~\eqref{HM} was considered
in the~1971 book~\cite[Chapter~VI, \S~2]{Samarskii_book1971},
where the scheme is given for $\vareps=\kappa\tau^2$.
As noted there by Samarskii, 
the Du~Fort and Frankel scheme~\cite{DuFortFrankel1953}
can be seen as a particular case of his scheme~\eqref{HM1}.

\item 
The HM scheme can be seen as a numerical method for solving 
the hyperbolically perturbed problem~\eqref{IVP_HM}, usually
called the HM approximation.
Its error with respect to the original problem~\eqref{IVP}, 
is proportional to $\vareps\max_{s\in[0,t]}\|\yt''(s)\|$ 
(see, e.g.,~\cite{RepinChetverushkin2013} or
relation~\eqref{et_est} derived here).  
As seen in numerical tests, the value of
$\max_{s\in[0,t]}\|\yt''(s)\|$ is typically proportional to $\|A\|^2$
(in parabolic problems 
$\sim 1/h^4$, with $h$ being the space grid step size). 

\item 
For symmetric positive definite $A$, the eigenvalues of the
amplification matrix in the HM scheme are smaller than one
in absolute value if and only if condition~\eqref{stab2} holds.
This condition coincides with Samarskii stability 
condition~\cite[Chapter~VI, \S~2]{Samarskii_book1971}.

\item 
The HM scheme is not monotone in the sense that it does not preserve 
nonnegativity solution properties of the original problem~\eqref{IVP}
even for arbitrarily small $\tau>0$ (see Remark~\ref{r:monot}).
 
\item 
The amplification matrix norm $\|S\|_2$ of the HM scheme can exceed one,
even if the Samarskii stability condition holds.
The powers $S^n$ can exhibit a significant 
growth in norm with the time step number~$n$,
and, as has been typically observed,
$\max_n\|S^n\|=O(\tau^{-1})$ (see Figure~\ref{f:Sn}).
This can corrupt convergence properties of the scheme.  
For a fixed~$n$, $\|S^n\|$
is not monotone with respect to $\mu=\tau\lambda$
(with $\lambda$ being an eigenvalue of~$A$).
For a fixed~$\mu$, $\|S^n\|$
is not monotone with respect to $n$ (see Figure~\ref{f:S_mu}).

\item
The HM scheme amplification matrix $S$ can be shown to be similar
to a block diagonal matrix with $2\times 2$ diagonal blocks $S_j$,
see relation~\eqref{Sj}.  The growth of $S^n$ in norm is observed
when some of the blocks $S_j$ become (close to) nondiagonalizable
(or, equivalently, when the eigenvalues~$\xi_{1,2}$ in the blocks~$S_j$
are mutually close)~\cite{CoremDitkowski2012}. 
To monitor the eventual growth $S^n$ in practice, 
we have proposed an estimate~\eqref{diff_est}, which can be evaluated
for a range $\lambda\in[\lambda_{\min},\lambda_{\max}]$
of the eigenvalues of~$A$.

\item
Adjusting $\vareps$ to mitigate the growth $S^n$, in particular,
taking it proportional to the time step size~$\tau$, does not
seem to yield a clear improvement (as it can simultaneously 
worsen the situation for some other diagonal blocks).  
The choice $\vareps=\tilde{C}\tau$ transforms
the Samarskii stability condition~\eqref{stab1} into 
an asymptotically tight restriction $\tau < \tilde{C}h^2$, see~\eqref{expl}.
Similar problems can be potentially expected for the choice $\vareps=K h$.

\end{enumerate}

A relevant question for a future research is whether an adaptive
choice of the small parameter $\vareps$ 
can help to resolve these problems.
For instance, it would be interesting to consider
a scheme~\eqref{HM} with $\vareps$ replaced by 
a diagonal matrix.

\section*{Acknowledgments}
The author thanks Victor Timofeevich Zhukov for proposing
this research topic and for many useful discussions.

The use of the hybrid supercomputer K-100 facilities installed 
in the Supercomputer center of collective usage of KIAM RAS
is acknowledged.

\section*{Funding}
This work is supported by ongoing research funding within the State assignment 
framework of the Keldysh Institute of Applied Mathematics of Russian Academy of 
Sciences.

\section*{Conflict of interest}
The author of this work declares that he has no conflicts of
interest.

\bibliography{hm,my_bib}
\bibliographystyle{ieeetr}

\end{document}